\documentclass{article}

\usepackage{amsthm}
\usepackage{amssymb}
\usepackage{amsmath}


\newtheorem{theorem}{Theorem}[section]
\newtheorem{lemma}[theorem]{Lemma}
\theoremstyle{remark}

\theoremstyle{definition}


\begin{document}

\title{Optimal Control for a Steady State\\ Dead Oil Isotherm Problem\thanks{This is a preprint 
of a paper whose final and definitive form will appear in \emph{Control and Cybernetics}. 
Paper submitted 24-Sept-2012; revised 21-March-2013; accepted for publication 17-April-2013.}}

\author{Moulay Rchid Sidi Ammi$^{1}$\\
\texttt{sidiammi@ua.pt}
\and Agnieszka B. Malinowska$^{2}$\\
\texttt{a.malinowska@pb.edu.pl}
\and Delfim F. M. Torres$^3$\\
\texttt{delfim@ua.pt}}

\date{$^1$Department of Mathematics,
AMNEA Group,
Faculty of Sciences and Techniques,
Moulay Ismail University,
B.P. 509 Errachidia, Morocco\\
$^2$Faculty of Computer Science,
Bia{\l}ystok University of Technology,
15-351 Bia{\l}ystok, Poland\\
$^3$Center for Research and Development in Mathematics and Applications,
Department of Mathematics, University of Aveiro,
3810-193 Aveiro, Portugal}

\maketitle


\begin{abstract}
We study the optimal control of a steady-state dead oil isotherm problem.
The problem is described by a system of nonlinear partial differential equations
resulting from the traditional modelling of oil engineering within
the framework of mechanics of a continuous medium. Existence and regularity
results of the optimal control are proved, as well as
necessary optimality conditions.
\end{abstract}


\smallskip

\textbf{Mathematics Subject Classification 2010:} 35K55, 49K20.

\smallskip


\smallskip

\textbf{Keywords:} dead oil isotherm problem; optimal control;
existence and regularity of solutions;
necessary optimality conditions.

\medskip


\section{Introduction}

We are interested in the optimal control of the steady-state dead oil isotherm problem:
\begin{equation}
\label{P}
\begin{cases}
- \Delta \varphi(u) = \mathop{\rm div}\left(g(u) \nabla p\right)
& \text{ in }  \Omega,\\
- \mathop{\rm div}\left(d(u) \nabla p\right) = f
& \text{ in } \Omega, \\
\left. u\right|_{\partial \Omega} = 0,  \\
\left. p\right|_{\partial \Omega} = 0,
\end{cases}
\end{equation}
where $\Omega$ is an open bounded domain in $\mathbb{R}^2$ with a
sufficiently smooth boundary. Equations \eqref{P} serve as a model
for an incompressible biphasic flow in a porous medium, with
applications to the industry of exploitation of hydrocarbons.
The reduced saturation of oil is denoted by $u$,
and $p$ is the global pressure. To understand
the optimal control problem that we consider here, some
words about the recovery of hydrocarbons are in order. For a more
detailed discussion about the physical justification of equations
\eqref{P} the reader is referred to \cite{mad,MR2342658,MR2397904}
and references therein. At the time of the first run of a layer,
the flow of the crude oil towards the
surface is due to the energy stored in the gases under pressure in
the natural hydraulic system. To mitigate the consecutive decline
of production and the decomposition of the site, water injections
are carried out, well before the normal exhaustion of the layer.
The water is injected through wells with high pressure, by pumps
specially drilled to this end. The pumps allow the displacement of
the crude oil towards the wells of production. The wells must be
judiciously distributed, which gives rise to a difficult problem
of optimal control: how to choose the best installation sites of
the production wells? This is precisely the question we deal
within this work. These requirements lead us to the following
objective functional:
\begin{equation}
\label{eq:cf}
J(u,p,f) = \frac{1}{2} \left\|u - U\right\|_{2}^2 +
\frac{1}{2} \left\|p - P\right\|_{2}^2 + \frac{\beta_1}{2}
\left\|f\right\|_{2 q_0}^{2 q_0},
\end{equation}
where $2 > q_0 > 1$ and $\beta_1 > 0$ is a coefficient of penalization.
The first two terms in \eqref{eq:cf} make possible to minimize the difference
between the reduced saturation of oil $u$,
the global pressure $p$ and the given data $U$ and $P$, respectively.
Our main goal is to present a method to carry out the optimal
control of \eqref{P} with respect to all the important parameters arising in the process.
More precisely, we seek necessary conditions for the admissible
parameters $u$, $p$ and $f$ to minimize the functional $J$.

Theoretical analysis of the time-dependent dead oil problem with different types
of boundary and initial conditions has received a significant amount of attention.
See \cite{mad} for existence of weak solutions to systems related to \eqref{P},
uniqueness and related regularity results in different settings
with various assumptions on the data. So far, optimal control
of a parabolic-elliptic dead oil system is studied in \cite{romaniaDG-DS}.
Optimal control of a discrete dead oil model is considered in \cite{sd2}.
Here we are interested to obtain necessary optimality conditions
for the steady-state case. This is, to the best of our knowledge,
an important open question.

Several techniques for deriving optimality conditions are
available in the literature of optimal control systems governed
by partial differential equations
\cite{Lions,Lions71,Boris2006,jrw,tomas2006,ss}.
In this work we apply the Lagrangian approach used with success by
Bodart, Boureau and Touzani for an optimal control
problem of the induction heating \cite{Bodart}, and by Lee and Shilkin
for the thermistor problem \cite{LeeShilkin}.

The motivation for our work is threefold.
Firstly, the vast majority of the existing literature on dead oil systems
deal with the parabolic-elliptic system. Considering that the relaxation
time for the saturation of oil $u$ is very small, the time derivative
with respect to the saturation is dropped. Hence we get the system \eqref{P}.
Such a steady-state dead oil  model represents a reasonably realistic situation
where we neglect the time derivative. Secondly, some technical difficulties
when dealing with system \eqref{P} arise and rely on the fact that there
is no information on the time derivative of the reduced saturation of oil
as well as on the pressure. As a result, one cannot use directly
the standard compactness results to obtain strong convergence
of sequences of solutions in appropriate spaces.
This is in contrast with \cite{romaniaDG-DS},
where a fully parabolic system is considered. Thirdly, the choice
of the cost function \eqref{eq:cf} for this time dependent problem seems
to be quite appropriate from the point of view of practical applications.

The paper is organized as follows. In Section~\ref{sec:Pr} we set up
notation and hypotheses. Additionally, we recall two lemmas needed in the sequel.
Our main results are stated and proved in the next two sections.
Under adequate assumptions $(H1)$ and $(H2)$ on the data of the problem,
existence and regularity of the optimal control are proved in Section~\ref{sec:3}.
In Section~\ref{sec:MR}, making use of the Lagrangian approach and assuming further
the hypothesis $(H3)$, we derive necessary optimality conditions for a triple
$\left(\bar{u},\bar{p},\bar{f}\right)$ to minimize \eqref{eq:cf} among all
functions $\left(u,p,f\right)$ verifying \eqref{P}. We end with Section~\ref{con}
of conclusion.


\section{Preliminaries}
\label{sec:Pr}

The following assumptions are needed throughout the paper.
Let $g$ and $d$ be real valued $C^1$-functions and $\varphi$
be a $C^{3}$ function. It is required that
\begin{description}

\item[(H1)] $0 < c_1 \le d(r)$, $g(r)$, $\varphi(r) \le c_2$;
$c_{3} \leq d'(r)$, $\varphi'(r)$, $\varphi''(r) \leq c_{4}$ for all $r \in \mathbb{R}$,
where $c_{i}$, $i= 1, \ldots, 4$, are positive constants.

\item[(H2)] $U$, $P$ $\in L^2(\Omega)$, where  $U$, $P : \Omega \rightarrow \mathbb{R}$.

\item[(H3)] $\left|\varphi'''(r)\right| \le c$ for all $r \in \mathbb{R}$.

\end{description}

Henceforth we use the standard notation for Sobolev spaces:
we denote $\|\cdot\|_{p}=\|\cdot\|_{L^{p}(\Omega)}$
for each $p \in [1,\infty]$ and
\begin{equation*}
W_p^{1}= W_p^{1}(\Omega) := \left\{ u \in L^p(\Omega), \,
\nabla u \in L^p(\Omega) \right\},
\end{equation*}
endowed with the norm $\left\|u\right\|_{W_p^{1}(\Omega)}
= \left\|u\right\|_{p} + \left\|\nabla u\right\|_{p}$;
\begin{equation*}
W_p^{2}=W_p^{2}(\Omega) := \left\{ u \in W_p^{1}(\Omega), \,
\nabla^2 u \in L^p(\Omega) \right\},
\end{equation*}
with the norm $\left\|u\right\|_{W_p^{2}(\Omega)}
= \left\|u\right\|_{W_p^{1}(\Omega)} + \left\|\nabla^2 u\right\|_{p}$.
Moreover, we set
\begin{gather*}
V :=  W_2^{1}(\Omega);\\
W := \left\{ u \in W_{2q}^{2}(\Omega), \,
\left. u\right|_{\partial \Omega} = 0\right\},\\
\Upsilon := \left\{ f \in L^{2 q}(\Omega)  \right\},\\
H := L^{2 q}(\Omega) \times \stackrel{\circ}{W}_{2 q}^{2 - \frac{1}{q}}(\Omega),
\end{gather*}
where $\stackrel{\circ}{W}_{p}^{l}(\Omega)$ is the interior of ${W}_{p}^{l}(\Omega)$.

In the sequel we use the following two lemmas
in order to get regularity of solutions.

\begin{lemma}[\cite{Rodrigues87,Xu}]
\label{meyers}
Let $\Omega\subset{R}^n$ be a bounded domain with a smooth boundary.
Assume that $g\in (L^2(\Omega))^n$ and $a\in C(\bar\Omega)$
with $\min_{\bar\Omega}a>0$. Let $u$ be the weak solution
to the following problem:
\begin{gather*}
-\nabla\cdot(a\nabla u)=\nabla\cdot g \quad\text{in }\Omega\\
u=0 \quad\text{on }\partial\Omega.
\end{gather*}
Then, for each $p>2$, there exists a positive constant $c^*$, depending only on
$n$, $\Omega$, $a$ and $p$, such that if $g\in (L^2(\Omega))^n$, then
\[
\| \nabla u\|_p\leq c^* \left(\|g \|_p+\| \nabla u\|_2\right).
\]
\end{lemma}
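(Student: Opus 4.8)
The plan is to derive the estimate by a localization-and-perturbation argument that reduces the variable-coefficient equation to the constant-coefficient Calder\'on--Zygmund theory for the Laplacian. Recall first that, on a bounded domain with smooth boundary, the solution operator of $-\Delta w=\nabla\cdot h$ in $\Omega$, $w|_{\partial\Omega}=0$, maps $h\in(L^{p}(\Omega))^{n}$ into $\nabla w\in(L^{p}(\Omega))^{n}$ with a finite norm $C_{p}$ for every $p\in(1,\infty)$; this is the standard $L^{p}$ theory, obtained e.g.\ from $W^{2,p}$ elliptic regularity together with the Poincar\'e inequality. Since $a\in C(\bar\Omega)$ it is uniformly continuous and $c_{0}:=\min_{\bar\Omega}a>0$, so for the prescribed target exponent $p>2$ we may fix $\eta>0$ with $C_{p}c_{0}^{-1}\eta<\tfrac12$ and then a scale $\delta>0$ such that $|a(x)-a(y)|<\eta$ whenever $|x-y|<\delta$.

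Next I would cover $\bar\Omega$ by finitely many balls $B_{i}$ of radius less than $\delta$, choose a smooth partition of unity $\{\theta_{i}\}$ subordinate to the cover, and write the equation for $u_{i}:=\theta_{i}u$. Expanding $-\nabla\cdot(a\nabla(\theta_{i}u))$ recasts the problem, with $x_{i}$ the centre of $B_{i}$, as
\[
-a(x_{i})\,\Delta u_{i}=\nabla\cdot\bigl(\theta_{i}\,g+(a(x_{i})-a)\nabla u_{i}\bigr)+R_{i},
\]
where $R_{i}$ collects the lower-order terms produced by the derivatives of $\theta_{i}$, so that $\|R_{i}\|_{W^{-1,p}}$ is bounded by $\|g\|_{p}+\|u\|_{W^{1,p}}$ over a slightly enlarged region. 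Applying the model-problem bound to $u_{i}$ and using $\|(a(x_{i})-a)\nabla u_{i}\|_{p}\le\eta\,\|\nabla u_{i}\|_{p}$ on $\operatorname{supp}\theta_{i}$, the choice $C_{p}c_{0}^{-1}\eta<\tfrac12$ absorbs the commutator term into the left-hand side and yields $\|\nabla u_{i}\|_{p}\le C\bigl(\|g\|_{p}+\|u\|_{W^{1,p}}\bigr)$. For the balls meeting $\partial\Omega$ I would flatten the boundary, which is licit by smoothness of $\partial\Omega$, and use the homogeneous condition $u|_{\partial\Omega}=0$ to stay within the corresponding half-space $L^{p}$ theory. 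Summing over $i$ gives the global bound $\|\nabla u\|_{p}\le C\bigl(\|g\|_{p}+\|u\|_{W^{1,p}}\bigr)$.

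It remains to discard $\|u\|_{W^{1,p}}$ on the right. The small-coefficient commutator has already been absorbed; the residual terms $R_{i}$ are of lower order, and since $n=2$ the embedding $W^{1,2}(\Omega)\hookrightarrow L^{q}(\Omega)$ for every $q<\infty$ together with the Poincar\'e inequality bounds their $W^{-1,p}$-norms by $\|\nabla u\|_{2}$ alone, giving, as an a priori estimate,
\[
\|\nabla u\|_{p}\le c^{*}\bigl(\|g\|_{p}+\|\nabla u\|_{2}\bigr);
\]
this is then made rigorous by a routine approximation (smoothing $a$ and $g$, solving the regularized problems and passing to the limit) or, equivalently, by a short finite bootstrap in the integrability exponent. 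The main obstacle is the absorption step: for it to succeed at the prescribed $p>2$ one needs the model-problem norm $C_{p}$ to be finite and the localization scale $\delta$ chosen relative to $C_{p}$, $c_{0}$ and the modulus of continuity of $a$; this is exactly where $a\in C(\bar\Omega)$, rather than merely bounded and measurable, is used, and it is why the estimate holds for every $p>2$ and not only on a small Meyers interval. A more self-contained alternative would be to prove a Caccioppoli inequality, deduce a reverse H\"older inequality and invoke Gehring's lemma, but that route delivers the estimate only for $p$ close to $2$ unless one again exploits the continuity of $a$.
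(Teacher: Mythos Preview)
The paper does not prove this lemma; it is quoted from \cite{Rodrigues87,Xu} as a known Meyers-type gradient estimate, so there is no in-paper argument to compare yours against. Your freezing-of-coefficients plus partition-of-unity scheme is the standard route to such estimates when $a$ is continuous, and the absorption step is set up correctly: choosing the localization radius via the modulus of continuity of $a$ so that $C_{p}c_{0}^{-1}\eta<\tfrac12$ is exactly what upgrades the small Meyers window to every $p>2$.

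One genuine gap: the lemma is stated for $\Omega\subset\mathbb{R}^{n}$, but your elimination of the lower-order remainder $R_{i}$ explicitly invokes the two-dimensional embedding $W^{1,2}\hookrightarrow L^{q}$ for all $q<\infty$. For $n\ge 3$ the inclusion $L^{2}\hookrightarrow W^{-1,p}$ fails once $p>2n/(n-2)$, so the terms $a\,\nabla\theta_{i}\!\cdot\!\nabla u$ and $\nabla\!\cdot(a\,u\,\nabla\theta_{i})$ cannot be controlled by $\|\nabla u\|_{2}$ alone. The fix is the bootstrap you allude to: place these terms in $W^{-1,p}$ via $\|\nabla u\|_{r}$ with $r<p$ determined by Sobolev duality, run the argument from $r=2$ up to the target $p$ in finitely many steps, and only then pass to the limit in the approximation. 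Since the paper works in $\mathbb{R}^{2}$ your shortcut is adequate for its purposes, but as written your proof does not cover the lemma in the generality stated.
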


\begin{lemma}[\cite{lsu}]
\label{lemma2.3}
For any function $u \in C^{\alpha}(\Omega)
\cap \stackrel{\circ}{W}_2^1(\Omega) \cap W_2^2(\Omega)$
there exist numbers $N_0$ and $\varrho_0$ such that for any
$\varrho \le \varrho_0$ there is a finite covering of $\Omega$
by sets of the type $\Omega_\varrho(x_i)$, $x_i \in \bar{\Omega}$, such that
the total number of intersections of different
$\Omega_{2 \varrho}(x_i) = \Omega \cap B_{2 \varrho}(x_i)$
does not increase $N_0$. Hence, we have the estimate
\begin{equation*}
\left\|\nabla u\right\|_{4}^4
\le c \left\|u\right\|_{C^{\alpha}(\Omega)}^2
\mathrm{\varrho}^{2 \alpha} \left( \left\|\nabla^2 u\right\|_{2}^2
+ \frac{1}{\varrho^2} \left\|\nabla u\right\|_{2}^2\right).
\end{equation*}
\end{lemma}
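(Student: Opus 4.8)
The plan is to combine a finite covering of $\Omega$ with bounded overlap with a scaled (local) Gagliardo--Nirenberg interpolation inequality on each ball of the covering. The point of the Hölder norm is that on a ball of radius $\varrho$ the continuity of $u$ lets us subtract a constant so as to replace $u$ by a function whose $L^\infty$-norm is at most $c\,\varrho^\alpha\|u\|_{C^\alpha(\Omega)}$ without altering its derivatives; summing the local estimates then produces the global bound with the advertised power $\varrho^{2\alpha}$ of the small parameter.

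First I would construct the covering. Since $\Omega$ is bounded with smooth boundary, for all sufficiently small $\varrho\le\varrho_0$ one can cover $\bar\Omega$ by balls $B_\varrho(x_i)$, $x_i\in\bar\Omega$, so that the doubled sets $\Omega_{2\varrho}(x_i)=\Omega\cap B_{2\varrho}(x_i)$ have the finite intersection property: no point of $\Omega$ lies in more than $N_0$ of them, with $N_0$ depending only on $n$ (a standard Besicovitch/lattice-type covering, the smoothness of $\partial\Omega$ being used only to keep the sets $\Omega_{2\varrho}(x_i)$ of comparable measure and uniformly Lipschitz). One also arranges that $B_{2\varrho}(x_i)\subset\Omega$ unless $x_i\in\partial\Omega$.

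The core is the local inequality: for each $i$,
\[
\|\nabla u\|_{L^4(\Omega_\varrho(x_i))}^4 \le c\,m_i^2\Bigl(\|\nabla^2 u\|_{L^2(\Omega_{2\varrho}(x_i))}^2 + \varrho^{-2}\|\nabla u\|_{L^2(\Omega_{2\varrho}(x_i))}^2\Bigr),
\]
where $m_i:=\|u-c_i\|_{L^\infty(\Omega_{2\varrho}(x_i))}$ with $c_i=u(x_i)$ for interior sets and $c_i=0$ for sets meeting $\partial\Omega$ (where $u(x_i)=0$); in both cases $m_i\le c\,\varrho^\alpha\|u\|_{C^\alpha(\Omega)}$ by Hölder continuity and $\mathrm{diam}\,\Omega_{2\varrho}(x_i)\le 4\varrho$. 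To prove this I would set $w=u-c_i$, pick a cutoff $\eta$ equal to $1$ on $B_\varrho(x_i)$, supported in $B_{2\varrho}(x_i)$, with $|\nabla\eta|\le c/\varrho$, write $\int\eta^4|\nabla w|^4=\int\eta^4|\nabla w|^2\,\nabla w\cdot\nabla w$ and integrate by parts. The boundary term vanishes: on the part of $\partial(\Omega_{2\varrho}(x_i))$ coming from $\partial B_{2\varrho}(x_i)$ because $\eta=0$ there, and on the part coming from $\partial\Omega$ because $w=u=0$ there. Estimating $|\mathrm{div}(|\nabla w|^2\nabla w)|\le c\,|\nabla^2 w|\,|\nabla w|$ and using Cauchy--Schwarz and Young to absorb $\int\eta^4|\nabla w|^4$ on the left yields the claim, the $\varrho^{-2}$ term arising from the factor $|\nabla\eta|^2$. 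Since $\nabla w=\nabla u$ and $\nabla^2 w=\nabla^2 u$, the local inequality follows.

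Finally, summing over $i$: the covering gives $\|\nabla u\|_{L^4(\Omega)}^4\le\sum_i\|\nabla u\|_{L^4(\Omega_\varrho(x_i))}^4$; the bounded overlap gives $\sum_i\|\nabla^2 u\|_{L^2(\Omega_{2\varrho}(x_i))}^2\le N_0\|\nabla^2 u\|_{L^2(\Omega)}^2$ and likewise for $\nabla u$; and $m_i^2\le c\,\varrho^{2\alpha}\|u\|_{C^\alpha(\Omega)}^2$ factors out uniformly in $i$. This is exactly the asserted estimate. I expect the main obstacle to be the boundary pieces: one must choose the constants $c_i$ and organise the covering so that the integration-by-parts boundary terms genuinely vanish (hence $c_i=0$ on boundary sets, which is consistent only because $u\in\stackrel{\circ}{W}_2^1(\Omega)$) and, to be fully rigorous, flatten $\partial\Omega$ locally using its smoothness so that the local inequality on $\Omega_{2\varrho}(x_i)$ reduces to one on a half-ball, with constants independent of $i$ and of $\varrho\le\varrho_0$.
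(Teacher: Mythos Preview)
The paper does not prove this lemma: it is quoted from Ladyzhenskaya--Solonnikov--Uraltseva \cite{lsu} and stated without proof, so there is no in-paper argument to compare against. Your proposal is correct and is essentially the classical argument one finds in that reference: localise by a bounded-overlap covering, subtract a constant on each patch to convert the $C^\alpha$ control into an $L^\infty$ bound of size $c\varrho^\alpha$, integrate $\int\eta^4|\nabla w|^4$ by parts to pull out a factor of $w$, and use Young's inequality to absorb the $|\nabla w|^4$ term; then sum using the bounded overlap. Your treatment of the boundary patches---taking $c_i=0$ there, which is exactly where the hypothesis $u\in\stackrel{\circ}{W}_2^1(\Omega)$ is used---and your remark about flattening $\partial\Omega$ to make the local constants uniform are the right technical points.
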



\section{Existence and Regularity of Optimal Solutions}
\label{sec:3}

In this section we prove existence and regularity of the optimal control
under assumptions $(H1)$ and $(H2)$ on the data of the problem.


\subsection{Existence of Optimal Solution}

The following existence theorem is proved using Young's inequality
together with the theorem of Lebesgue and some compactness arguments
of Lions \cite{Lions}. The existence follows from the fact that $J$
is lower semicontinuous with respect to the weak convergence.
Recall that along the text constants $c$ are generic,
and may change at each occurrence.

\begin{theorem}
\label{theorem3.1}
Under the hypotheses (H1) and (H2) there exists a $q > 1$,
depending on the data of the problem, such that the
problem of minimizing \eqref{eq:cf} subject to \eqref{P} has an
optimal solution $\left(\bar{u},\bar{p},\bar{f}\right)$ satisfying
\begin{gather*}
\bar{u} \in W_{q}^{2}(\Omega) \cap L^{2}(\Omega),\\
\bar{p} \in L^2(\Omega) \cap W_{2 q}^{1}(\Omega),
\quad \bar{f} \in L^{2 q_0}(\Omega).
\end{gather*}
\end{theorem}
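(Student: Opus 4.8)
The plan is to argue by the direct method of the calculus of variations. First I would check that the admissible set is nonempty: for a fixed $f \in L^{2q_0}(\Omega)$ (say $f \equiv 0$ or any convenient element), the second equation $-\mathop{\rm div}(d(u)\nabla p) = f$ coupled with the first is solved by a standard fixed-point/Galerkin argument, using the uniform ellipticity $0 < c_1 \le d(u), g(u) \le c_2$ from (H1); this produces at least one triple $(u,p,f)$ verifying \eqref{P}, so $J$ has a finite infimum $m \ge 0$. Then I would take a minimizing sequence $(u_n,p_n,f_n)$ with $J(u_n,p_n,f_n) \to m$. The third term $\frac{\beta_1}{2}\|f_n\|_{2q_0}^{2q_0}$ in \eqref{eq:cf} gives a uniform bound on $\|f_n\|_{2q_0}$, hence (up to a subsequence) $f_n \rightharpoonup \bar f$ weakly in $L^{2q_0}(\Omega)$.

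Next I would derive uniform a priori estimates on $(u_n,p_n)$. Testing the pressure equation with $p_n$ and using $d(u_n)\ge c_1$ gives a bound on $\|\nabla p_n\|_2$; testing the saturation equation with $u_n$ (or rather with $\varphi(u_n)$, using $\varphi' \ge c_3 > 0$ so that $\varphi$ is bi-Lipschitz) and absorbing the right-hand side $\mathop{\rm div}(g(u_n)\nabla p_n)$ via Cauchy–Schwarz and the bound on $\nabla p_n$ gives a bound on $\|\nabla u_n\|_2$. This puts $(u_n,p_n)$ in a bounded set of $V = W_2^1(\Omega)$. To upgrade to the higher regularity claimed in the theorem, I would invoke Lemma~\ref{meyers} (Meyers-type estimate): since $g(u_n)\nabla p_n$ and the coefficient $d(u_n)$ are controlled, there is $p>2$ with $\|\nabla u_n\|_p \le c^*(\|g(u_n)\nabla p_n\|_p + \|\nabla u_n\|_2)$, and symmetrically for $\nabla p_n$; bootstrapping these two Meyers estimates against each other fixes the exponent $q>1$ (this is where "$q$ depending on the data" comes from — it is essentially the Meyers exponent), and then elliptic $W^2$-regularity for $-\Delta \varphi(u_n) = \mathop{\rm div}(g(u_n)\nabla p_n)$ gives $u_n$ bounded in $W_q^2(\Omega)$ and $p_n$ bounded in $W_{2q}^1(\Omega)$.

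Having the uniform bounds, I would extract a further subsequence so that $u_n \rightharpoonup \bar u$ in $W_q^2(\Omega)$, $p_n \rightharpoonup \bar p$ in $W_{2q}^1(\Omega)$, and — by Rellich–Kondrachov — $u_n \to \bar u$ and $p_n \to \bar p$ strongly in $L^2(\Omega)$ and a.e. The main obstacle, and the one I would spend the most care on, is passing to the limit in the nonlinear terms $g(u_n)\nabla p_n$ and $d(u_n)\nabla p_n$: because this is a steady-state problem there is no time-derivative estimate and hence no Aubin–Lions compactness in the usual form, so I would combine a.e. convergence of $u_n$ (giving $g(u_n) \to g(\bar u)$ a.e., bounded in $L^\infty$, hence strong convergence in every $L^r$, $r<\infty$, by dominated convergence since $g$ is bounded and continuous) with the weak $L^{2q}$ convergence of $\nabla p_n$, so that the products converge weakly in some $L^s$; the div-structure then lets one pass to the limit in the weak formulation. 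This shows $(\bar u,\bar p,\bar f)$ satisfies \eqref{P}. Finally, lower semicontinuity of each of the three (convex, or $L^2$-norm) terms of $J$ under the weak convergences established gives $J(\bar u,\bar p,\bar f) \le \liminf J(u_n,p_n,f_n) = m$, hence $(\bar u,\bar p,\bar f)$ is optimal, with the asserted regularity. I expect the delicate points to be (i) nailing down the Meyers exponent and the self-consistent choice of $q$, and (ii) the compactness argument for the nonlinear products in the absence of time regularity.
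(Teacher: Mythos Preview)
Your proposal is correct and follows essentially the same direct-method strategy as the paper: bound $f_n$ via the cost, use elliptic/Meyers-type estimates (Lemma~\ref{meyers}) to get $\nabla p_n, \nabla u_n \in L^{2q}$ and then $u_n \in W^2_q$, extract weak limits, use Rellich for strong $L^2$ convergence, pass to the limit in \eqref{P}, and conclude by weak lower semicontinuity of $J$. Your write-up is in fact more thorough than the paper's (you add the nonemptiness check and spell out the product-limit argument the paper leaves implicit), but the architecture is the same.
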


\begin{proof}
Let $\left(u^{m}, p^{m}, f^{m}\right)
\in W_{2}^{1}(\Omega) \times V \times L^{2q_{0}}(\Omega)$
be a sequence minimizing $J(u, p, f)$. Then we have that
$\left(f^{m}\right)$ is bounded in $L^{2q_{0}}(\Omega)$.
By the second equation of \eqref{P} governed by the global pressure
and a general result of elliptic PDEs \cite{bensoussan}, under our hypotheses we have that
$\nabla p^{m}$ is bounded in $L^{2q}(\Omega)$.  Writing now the first equation of \eqref{P} as
$$
- \mathop{\rm div} \left(\varphi'(u^{m}) \varphi(u^{m})\right)
= \mathop{\rm div}\left(g(u^{m}) \nabla p^{m}\right)
$$
and using Lemma~\ref{meyers}, we obtain $\nabla u^{m} \in L^{2q}(\Omega)$.
Hypotheses allow us to express again the first equation of \eqref{P} as
$$
-\varphi'(u^{m})\triangle u^{m}
- \varphi''(u^{m}) |\nabla u^{m}|^{2}= \mathop{\rm div}(g(u^{m})\nabla p^{m}).
$$
Hence,
$$
\|u^{m}\|_{W_{q}^{2}(\Omega)} \leq c,
$$
where all the constants $c$ are independent of $m$.
Using the Lebesgue theorem and compactness arguments of Lions \cite{Lions},
we can extract subsequences, still denoted by $(p^{m})$, $(u^{m})$
and $(f^{m})$, such that
$$
u^{m} \rightarrow \overline{u} \mbox{ weakly in }  W_{q}^{2}(\Omega),
$$
$$
p^{m} \rightarrow \overline{p} \mbox{ weakly in } W_{2q}^{1}(\Omega),
$$
$$
f^{m} \rightarrow \overline{f} \mbox{ weakly in }
L^{2q_{0}}(\Omega).
$$
Then, by Rellich's theorem, we have
$$
p^{m} \rightarrow \overline{p} \mbox{ strongly in } L^{2}(\Omega).
$$
Therefore, by using these facts and passing to the limit in problem \eqref{P},
it follows from the weak lower semicontinuity of $J$ with respect to the weak convergence,
that the infimum is achieved at $\left(\overline{u}, \overline{p}, \overline{f}\right)$.
\end{proof}


\subsection{Regularity of Solutions}

Regularity of solutions given by Theorem~\ref{theorem4.1} is
obtained using Young's and Holder's inequalities, the Gronwall lemma,
the De Giorgi–-Nash–-Ladyzhenskaya–-Uraltseva theorem, an estimate
from \cite{ks}, and some technical lemmas that can be found in \cite{lsu}.

\begin{theorem}
\label{theorem4.1}
Let $\left(\bar{u},\bar{p},\bar{f}\right)$ be
an optimal solution to the problem of minimizing \eqref{eq:cf}
subject to \eqref{P}. Suppose that (H1) and (H2) are satisfied.
Then, there exist $\alpha > 0$ such that the following regularity
conditions hold:
\begin{gather}
\bar{u}, \bar{p} \in C^{\alpha}(\Omega), \label{eq:4.1} \\
\bar{u}, \, \bar{p} \in W_{4}^{1}(\Omega), \label{eq:4.2} \\
\bar{u}, \, \bar{p} \in W_{2}^{2}(\Omega), \label{eq:4.3} \\
\bar{u} \in C^{\frac{1}{4}}(\overline{\Omega}), \label{eq:4.5} \\
\bar{u} \in W_{2 q_0}^{2}(\Omega), \quad \bar{p} \in W_{2 q_0}^{2}(\Omega), \label{eq:4.6}
\end{gather}
where $q_{0}$ appears in the cost function \eqref{eq:cf}.
\end{theorem}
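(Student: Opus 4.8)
The plan is to establish the regularity chain \eqref{eq:4.1}--\eqref{eq:4.6} step by step, feeding the output of each bootstrap into the next, always keeping the estimates uniform and invoking the qualitative results (De Giorgi--Nash--Ladyzhenskaya--Uraltseva, Lemmas~\ref{meyers} and \ref{lemma2.3}) to upgrade integrability and then Hölder continuity. First I would record that from Theorem~\ref{theorem3.1} we already have $\bar u \in W_q^2(\Omega)$, $\bar p \in W_{2q}^1(\Omega)$ for some $q>1$; in dimension $n=2$ the Sobolev embedding $W_q^2 \hookrightarrow C^{\alpha}$ (for $q$ close enough to $1$, after possibly shrinking $q$ or using that $\nabla \bar u \in L^{2q}$ with $2q>2$) gives $\bar u \in C^{\alpha}(\Omega)$, and the De Giorgi--Nash--Moser theorem applied to the pressure equation $-\mathrm{div}(d(\bar u)\nabla \bar p)=\bar f$ — whose coefficient $d(\bar u)$ is bounded between $c_1$ and $c_2$ by (H1) and whose right-hand side $\bar f\in L^{2q_0}$ with $q_0>1$ lies in the correct Stampacchia range — yields $\bar p \in C^{\alpha}(\Omega)$. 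That proves \eqref{eq:4.1}.

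Next, for \eqref{eq:4.2} I would rewrite the first equation of \eqref{P} in the form $-\mathrm{div}(\varphi'(\bar u)\nabla \bar u) = \mathrm{div}(g(\bar u)\nabla \bar p)$ and apply Lemma~\ref{meyers} with $a=\varphi'(\bar u)\in C(\bar\Omega)$, $\min a \ge c_3>0$, and source term $g(\bar u)\nabla \bar p$; since $\nabla \bar p\in L^{2q}$ with $2q>2$, Meyers' estimate promotes $\nabla \bar u$ into $L^{p}$ for some $p>2$, and taking $p=4$ (which is admissible in $n=2$ after a finite number of Meyers iterations, the higher-integrability exponent depending only on the ellipticity ratio) gives $\bar u\in W_4^1$; reinserting $\nabla \bar u\in L^4$ into the pressure equation via Lemma~\ref{meyers} again gives $\bar p\in W_4^1$, establishing \eqref{eq:4.2}. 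For \eqref{eq:4.3} I would use the pointwise form of the first equation, $-\varphi'(\bar u)\Delta \bar u = \varphi''(\bar u)|\nabla \bar u|^2 + \mathrm{div}(g(\bar u)\nabla \bar p)$; by (H1), $\varphi'(\bar u)$ is bounded below, $\varphi''(\bar u)$ is bounded, and $|\nabla \bar u|^2\in L^2$ by \eqref{eq:4.2}, while $\mathrm{div}(g(\bar u)\nabla \bar p) = g'(\bar u)\nabla \bar u\cdot\nabla \bar p + g(\bar u)\Delta \bar p$ — here I would first get $\bar p\in W_2^2$ from the pressure equation (its right side $\bar f + d'(\bar u)\nabla\bar u\cdot\nabla\bar p$ is in $L^2$ since $\nabla\bar u,\nabla\bar p\in L^4$), then $\Delta \bar p\in L^2$ and $\nabla\bar u\cdot\nabla\bar p\in L^2$ give $\Delta \bar u\in L^2$, hence $\bar u\in W_2^2$ by elliptic $L^2$ regularity with zero boundary data. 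This proves \eqref{eq:4.3}.

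For \eqref{eq:4.5} I would combine $\bar u\in W_2^2\cap C^{\alpha}\cap \stackrel{\circ}{W}_2^1$ with Lemma~\ref{lemma2.3} to obtain the quantitative bound $\|\nabla \bar u\|_4^4 \le c\|\bar u\|_{C^\alpha}^2 \varrho^{2\alpha}(\|\nabla^2\bar u\|_2^2 + \varrho^{-2}\|\nabla\bar u\|_2^2)$; optimizing in $\varrho$ and using interpolation $W_2^2\hookrightarrow C^{1/4}(\bar\Omega)$ in $n=2$ (or more directly, iterating the $W_2^2$ bound together with the De Giorgi--Nash estimate to pin down the exponent as $1/4$) gives the global Hölder estimate $\bar u\in C^{1/4}(\bar\Omega)$. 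Finally, for \eqref{eq:4.6} I would do one more bootstrap at the exponent $2q_0$: since $\bar u\in C^{1/4}(\bar\Omega)$ and $\nabla\bar u,\nabla\bar p\in L^4$, Lemma~\ref{lemma2.3} (with $\alpha=1/4$) controls $\|\nabla\bar u\|_4$ and $\|\nabla\bar p\|_4$ in terms of the second-order norms, so the right-hand sides $\bar f + d'(\bar u)\nabla\bar u\cdot\nabla\bar p \in L^{2q_0}$ (using $\bar f\in L^{2q_0}$ and $\nabla\bar u\cdot\nabla\bar p\in L^2\subset L^{2q_0}$ since $q_0<2$) and $\varphi''(\bar u)|\nabla\bar u|^2 + g'(\bar u)\nabla\bar u\cdot\nabla\bar p + g(\bar u)\Delta\bar p\in L^{2q_0}$ (the term $\Delta\bar p\in L^2\subset L^{2q_0}$, and here an estimate from \cite{ks} plus Gronwall is used to close the nonlinear feedback between $\|\nabla^2\bar u\|_{2q_0}$ and $\|\nabla^2\bar p\|_{2q_0}$) let elliptic $W_p^2$ regularity conclude $\bar u,\bar p\in W_{2q_0}^2(\Omega)$, which is \eqref{eq:4.6}. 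The main obstacle I expect is \eqref{eq:4.6}: the two second-order estimates are coupled through the quadratic gradient terms, so one must set up a simultaneous Gronwall-type argument on $\|\nabla^2\bar u\|_{2q_0} + \|\nabla^2\bar p\|_{2q_0}$, using Lemma~\ref{lemma2.3} with the small factor $\varrho^{2\alpha}$ to absorb the top-order contributions — getting the constants to be genuinely absorbing (rather than merely finite) and respecting the restriction $q_0<2$ is the delicate point.
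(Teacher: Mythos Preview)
Your bootstrap has the right architecture but contains two concrete gaps, and you invoke Lemma~\ref{lemma2.3} at the wrong stage compared with the paper. For \eqref{eq:4.2}, your Meyers argument is circular: Lemma~\ref{meyers} applied to $-\mathop{\rm div}(\varphi'(\bar u)\nabla\bar u)=\mathop{\rm div}(g(\bar u)\nabla\bar p)$ with source in $L^{2q}$ gives only $\nabla\bar u\in L^{2q}$; iterating does not raise the exponent because the source has not improved, and ``reinserting $\nabla\bar u\in L^4$ into the pressure equation via Lemma~\ref{meyers}'' makes no sense since that equation has right-hand side $\bar f$, not a divergence, and does not involve $\nabla\bar u$. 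The paper's mechanism is different: it first records the comparison $\|\nabla\bar u\|_4\le c\|\nabla\bar p\|_4$ from the first equation (citing \cite{ks}) and then applies Lemma~\ref{lemma2.3} to $\bar p$, using the H\"older regularity \eqref{eq:4.1} so that the small factor $\varrho^{2\alpha}$ absorbs the top-order term on the right for $\varrho$ small, yielding $\nabla\bar p\in L^4$ and hence $\nabla\bar u\in L^4$. Thus Lemma~\ref{lemma2.3} is the engine for \eqref{eq:4.2}, not a tool for \eqref{eq:4.5} as you propose; \eqref{eq:4.5} in the paper is the one-line embedding $W_4^1(\Omega)\hookrightarrow C^{1/4}(\bar\Omega)$ in dimension two, applied directly to \eqref{eq:4.2}.

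For \eqref{eq:4.6} you write ``$\nabla\bar u\cdot\nabla\bar p\in L^2\subset L^{2q_0}$ since $q_0<2$'' and likewise for $\Delta\bar p$; on a bounded domain the inclusion goes the other way, and $L^2\subset L^{2q_0}$ would need $q_0\le 1$, whereas here $q_0>1$. The paper's route is much simpler than your coupled absorption scheme: once \eqref{eq:4.2}--\eqref{eq:4.3} hold (in two dimensions $W_2^2\hookrightarrow W_r^1$ for every finite $r$), the right-hand side of the first equation lies in $L^4(\Omega)\hookrightarrow L^{2q_0}(\Omega)$ because $2q_0<4$, giving $\bar u\in W_{2q_0}^2$; then $\bar f\in L^{2q_0}$ yields $\bar p\in W_{2q_0}^2$ from the second equation. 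No Gronwall-type closure is needed (there is no time variable in this steady problem), and the two estimates decouple rather than feed back into one another.
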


\begin{proof}
Firstly, \eqref{eq:4.1} is an immediate application
of the general results of \cite{lsu,Lions,sol}.
To continue the proof of Theorem~\ref{theorem4.1}, we need to estimate
$\|\nabla u\|_{4}$ in function of $\|\nabla p\|_{4}$.
Taking into account the first equation of \eqref{P},
it is well known that $u \in W_{4}^{1}(\Omega)$ (see \cite{ks}) and
\begin{equation}
\label{eq:4.7}
\|\nabla u\|_{4} \leq c \|\nabla p\|_{4}.
\end{equation}
Using Lemma~\ref{lemma2.3}, we have,
for any $\varrho < \varrho_{0}$, that
\begin{equation*}
\left\|\nabla p\right\|_{4}^4 \le c \|p\|_{C^{\alpha}(\bar{\Omega})}^2
\mathrm{\varrho}^{2 \alpha} \left\{ \left\|\nabla p\right\|_{4}^4
+ \frac{1}{\varrho^2} \left\|\nabla p\right\|_{2}^2\right\}.
\end{equation*}
Therefore, we get \eqref{eq:4.2} for an eligible choice of $\varrho$.
Using \eqref{eq:4.7}, we obtain that $u \in W_{4}^{1}(\Omega)$.
On the other hand, by the first equation of \eqref{P} and the regularity \eqref{eq:4.2},
we have that $u \in W_{2}^{2}(\Omega)$. Moreover, it follows, by the fact that
$u \in W_{2}^{2}(\Omega)$, that $p \in W_{2}^{2}(\Omega)$.
Using again \eqref{eq:4.2} and the fact that $W_{4}^{1}(\Omega)
\hookrightarrow C^{\frac{1}{4}}(\overline{\Omega})$, the
regularity estimate \eqref{eq:4.5} follows.
Finally, the right-hand side of the first equation of \eqref{P}
belongs to $L^{4}(\Omega)\hookrightarrow L^{2q_{0}}(\Omega)$ as
$2q_{0} < 4$. Thus, by \eqref{eq:4.3} we get
$u \in W_{2q_{0}}^{2}(\Omega)$. Since $f \in L^{2q_{0}}(\Omega)$,
the same estimate follows from the second equation
of the system \eqref{P} for $p$.
\end{proof}


\section{Necessary Optimality Conditions}
\label{sec:MR}

We define the following nonlinear operator corresponding to
\eqref{P}:
\begin{gather*}
F : W \times W \times \Upsilon \longrightarrow H \\
\left(u,p,f\right) \longrightarrow F(u,p,f) = 0,
\end{gather*}
where
\begin{equation*}
F(u,p,f) =
\left(
\begin{array}{cc}
   - \Delta \varphi(u) - \mathop{\rm div}(g(u) \nabla p) & \\
   - \mathop{\rm div}\left(d(u) \nabla p\right) - f &  \\
\end{array}
\right).
\end{equation*}
 Due to the estimate
\begin{equation*}
\left\|v\right\|_{W_{\frac{4 q}{2 - q}}^{1}(\Omega)} \le c \left\|v\right\|_{W_{2 q}^{2}(\Omega)},
\quad \forall v \in W_{2 q}^{2}(\Omega),
\quad 1 < q < 2
\end{equation*}
(see \cite{lsu}), hypothesis (H1) and regularity
results (Theorem~\ref{theorem4.1}), we have
\begin{equation*}
\varphi'(u)\Delta u, \varphi''(u) \left|\nabla u\right|^2 , \quad
g'(u) \nabla u \nabla p , \quad d(u) \nabla u \nabla p \in L^{\frac{2
q}{2 - q}}(\Omega) \subset L^{2 q}(\Omega).
\end{equation*}
Thus, it follows that $F$ is well defined.


\subsection{G\^{a}teaux Differentiability}

\begin{theorem}
\label{thm5.1}
Let assumptions (H1) through (H3) hold.
Then, the operator $F$ is G\^{a}teaux differentiable
and its derivative is given by
\begin{multline*}
\delta F(u,p,f)(e,w,h) = \frac{d}{ds} F\left(u + s e, p
+ s w, f + s h\right)\left.\right|_{s=0}
= \left(\delta F_1, \delta F_2\right) \\
= \left(\begin{array}{cc}
- \mathop{\rm div}\left(\varphi'(u)\nabla e\right)
- \mathop{\rm div}\left(\varphi''(u) e \nabla u\right)
- \mathop{\rm div}\left(g(u) \nabla w\right)
- \mathop{\rm div}\left(g'(u) e \nabla p\right) &  \\
- \mathop{\rm div}\left(d(u) \nabla w\right)
- \mathop{\rm div}\left(d'(u) e \nabla p\right) - h &  \\
\end{array}
\right)
\end{multline*}
for all $(e,w,h) \in W \times W \times \Upsilon$.
Furthermore, for any optimal solution
$\left(\bar{u},\bar{p},\bar{f}\right)$ of the problem of
minimizing \eqref{eq:cf} among all the functions
$\left(u,p,f\right)$ satisfying \eqref{P}, the image of $\delta
F\left(\bar{u},\bar{p},\bar{f}\right)$ is equal to $H$.
\end{theorem}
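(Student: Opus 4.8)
The plan is to prove the two assertions separately: first the Gâteaux differentiability of $F$ with the displayed formula, then the surjectivity of $\delta F(\bar u,\bar p,\bar f)$.

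\emph{Differentiability.} I would write $-\Delta\varphi(v)=-\mathop{\rm div}(\varphi'(v)\nabla v)$, so that all three nonlinear expressions defining $F$ are in divergence form, and differentiate $s\mapsto F(u+se,p+sw,f+sh)$ at $s=0$ by the chain rule. Differentiating $\varphi'(u+se)\nabla(u+se)$, $g(u+se)\nabla(p+sw)$ and $d(u+se)\nabla(p+sw)$ termwise and evaluating at $s=0$ produces $-\mathop{\rm div}(\varphi''(u)e\nabla u+\varphi'(u)\nabla e)-\mathop{\rm div}(g'(u)e\nabla p+g(u)\nabla w)$ for the first component and $-\mathop{\rm div}(d'(u)e\nabla p+d(u)\nabla w)-h$ for the second, which coincide with the stated $\delta F_1$ and $\delta F_2$; note that no $g''$ or $d''$ occurs precisely because the $g'$- and $d'$-terms are kept under the divergence, consistently with $g,d\in C^1$. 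To make this rigorous I would show that the difference quotient $\frac{1}{s}\bigl[F(u+se,p+sw,f+sh)-F(u,p,f)\bigr]$ converges to $\delta F(u,p,f)(e,w,h)$ in the norm of $H$. For the $\varphi$-terms one uses Taylor's theorem, $\varphi'(u+se)=\varphi'(u)+s\,e\,\varphi''(u)+O(s^2)$, with the $O(s^2)$ remainder controlled uniformly by the bound on $\varphi'''$ from (H3); for the $g$- and $d$-terms, $g,d\in C^1$ together with the mean value theorem gives convergence of the incremental quotients, which is upgraded to convergence in $L^{2q}(\Omega)$ by dominated convergence. Every product of gradients that arises after expanding the divergences ($\varphi'(u)\Delta u$, $\varphi''(u)|\nabla u|^2$, $\varphi'''(u)\,e\,|\nabla u|^2$, $g'(u)\nabla u\cdot\nabla p$, $d(u)\nabla u\cdot\nabla p$, and so on) is estimated in $L^{2q}(\Omega)$ exactly as in the remark preceding the theorem, i.e. by Hölder's inequality and the embedding $W_{2q}^2(\Omega)\hookrightarrow W_{4q/(2-q)}^1(\Omega)$, using also the regularity of $(\bar u,\bar p)$ from Theorem~\ref{theorem4.1} — the same regularity that guarantees $(\bar u,\bar p,\bar f)\in W\times W\times\Upsilon$ in the first place. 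Linearity of $(e,w,h)\mapsto\delta F(u,p,f)(e,w,h)$ is immediate from the formula, and its continuity $W\times W\times\Upsilon\to H$ follows from the same estimates.

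\emph{Surjectivity.} Given an arbitrary $(\zeta_1,\zeta_2)\in H$, I would seek a preimage of the special form $(0,w,h)$. With $e=0$ all the $e$-terms in $\delta F$ drop out, so it suffices first to solve the linear Dirichlet problem
\begin{equation*}
-\mathop{\rm div}\bigl(g(\bar u)\nabla w\bigr)=\zeta_1 \ \text{ in }\ \Omega,\qquad w|_{\partial\Omega}=0 .
\end{equation*}
By (H1) the coefficient $g(\bar u)$ is bounded below by $c_1>0$, and by Theorem~\ref{theorem4.1} one has $\bar u\in C^{\frac{1}{4}}(\overline{\Omega})$, hence $g(\bar u)\in C(\overline{\Omega})$ since $g\in C^1$; consequently the $L^p$-theory for elliptic equations (cf. \cite{bensoussan,lsu}) yields a unique solution $w\in W_{2q}^2(\Omega)\cap\stackrel{\circ}{W}_{2q}^{1}(\Omega)=W$, with $\|w\|_{W_{2q}^2(\Omega)}\le c\,\|\zeta_1\|_{2q}$. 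Fixing this $w$, the second component of $\delta F(\bar u,\bar p,\bar f)(0,w,h)$ equals $-\mathop{\rm div}(d(\bar u)\nabla w)-h$, so I set
\begin{equation*}
h:=-\mathop{\rm div}\bigl(d(\bar u)\nabla w\bigr)-\zeta_2 .
\end{equation*}
Since $\bar u\in W_{2q_0}^2(\Omega)$ forces $\nabla\bar u\in L^{\infty}(\Omega)$ and $d(\bar u)$ Lipschitz, $w\in W_{2q}^2(\Omega)$, and $\zeta_2\in\stackrel{\circ}{W}_{2q}^{2-\frac{1}{q}}(\Omega)\hookrightarrow L^{2q}(\Omega)$, one checks $h\in L^{2q}(\Omega)=\Upsilon$. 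By construction $\delta F(\bar u,\bar p,\bar f)(0,w,h)=(\zeta_1,\zeta_2)$, so the image of $\delta F(\bar u,\bar p,\bar f)$ is all of $H$.

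I expect the differentiability part to be the real difficulty. The formal chain-rule computation is harmless; what needs care is showing that the \emph{entire} difference quotient converges in $H$, which forces one to pin down the precise integrability of each nonlinear gradient product — and this is exactly where (H3), the sharp embedding $W_{2q}^2\hookrightarrow W_{4q/(2-q)}^1$, and the regularity supplied by Theorem~\ref{theorem4.1} enter. The surjectivity, by contrast, comes down to solving a single uniformly elliptic Dirichlet problem and defining $h$ explicitly, and is comparatively routine.
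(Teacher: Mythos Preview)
Your differentiability argument is essentially the paper's: the formal chain-rule computation is identical, and your justification via H\"older and the embedding $W_{2q}^{2}\hookrightarrow W_{4q/(2-q)}^{1}$ is exactly the content of the paper's Lemma~\ref{lemma5.2}, which establishes that $\delta F(u,p,f)$ is linear and bounded on $W\times W\times\Upsilon$.

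Your surjectivity argument, however, takes a genuinely different route. The paper fixes $h=0$ and, for given $(\alpha,\beta)\in H$, solves the \emph{coupled} linear system \eqref{eq:4.15}--\eqref{eq:4.16} for $(e,w)$ simultaneously, invoking general elliptic theory for systems after checking that all coefficients built from $\bar u,\bar p$ lie in the right Lebesgue spaces. You instead fix $e=0$, which decouples the problem: you solve the single scalar equation $-\mathop{\rm div}(g(\bar u)\nabla w)=\zeta_1$ for $w\in W$ and then \emph{define} $h:=-\mathop{\rm div}(d(\bar u)\nabla w)-\zeta_2\in\Upsilon$. Your approach is more elementary --- one uniformly elliptic Dirichlet problem with a continuous scalar coefficient, rather than a $2\times2$ system --- and it exploits the slack in the control variable $h$ that the paper does not use. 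The paper's route, on the other hand, establishes the stronger fact that $(e,w)\mapsto\delta F(\bar u,\bar p,\bar f)(e,w,0)$ is already onto $H$, though this extra information is not needed for Theorem~\ref{thm5.2}. Both arguments are correct; yours is shorter, and your appeal to $\bar u\in C^{1/4}(\overline{\Omega})$ to get continuity of $g(\bar u)$, together with $2q_0>2$ in two dimensions to place $\nabla\bar u\in L^\infty$, is exactly what is required to invoke the scalar $W^{2,2q}$ estimate.
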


To prove Theorem~\ref{thm5.1} we make use of the following lemma.

\begin{lemma}
\label{lemma5.2}
The operator $\delta F(u,p,f) : W \times W \times
\Upsilon \longrightarrow H$ is  linear and bounded.
\end{lemma}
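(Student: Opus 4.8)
The plan is to verify linearity and boundedness of $\delta F(u,p,f)$ separately, treating each of the two components $\delta F_1$ and $\delta F_2$ in turn. Linearity in $(e,w,h)$ is immediate by inspection: each term in the displayed expression for $\delta F(u,p,f)(e,w,h)$ is of the form $\mathop{\rm div}$ of a coefficient (depending only on the fixed point $(u,p,f)$, never on $(e,w,h)$) times one of $\nabla e$, $e\nabla u$, $\nabla w$, $e\nabla p$, or simply $-h$; all of these are linear in the triple $(e,w,h)$. So the only real work is the boundedness estimate
\[
\bigl\|\delta F(u,p,f)(e,w,h)\bigr\|_H \le c\,\bigl(\|e\|_W + \|w\|_W + \|h\|_\Upsilon\bigr).
\]

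For this, recall that the target space is $H = L^{2q}(\Omega) \times \stackrel{\circ}{W}{}_{2q}^{2-\frac{1}{q}}(\Omega)$, so I must estimate the first component in $L^{2q}$ and the second in the fractional Sobolev norm $W_{2q}^{2-1/q}$. First I would expand each $\mathop{\rm div}(\cdot)$ by the product rule into a sum of a term with second derivatives of $e$ or $w$ and a term with products of first derivatives; for instance $\mathop{\rm div}(\varphi'(u)\nabla e) = \varphi'(u)\Delta e + \varphi''(u)\nabla u\cdot\nabla e$. Then I would bound each resulting term by Hölder's inequality, using: (i) hypothesis (H1) to control $\varphi'$, $\varphi''$, $g$, $g'$, $d$, $d'$ in $L^\infty$ (here (H3) and the $C^1$/$C^3$ assumptions also guarantee the relevant derivatives of $g$, $d$, $\varphi$ are continuous hence bounded on the range of the fixed solution, which is compact by the $C^\alpha$ regularity of Theorem~\ref{theorem4.1}); (ii) the regularity of the fixed data — by Theorem~\ref{theorem4.1}, $u,p \in W_{2q_0}^2(\Omega)$ and $u \in C^{1/4}(\overline\Omega)$, and the embedding $\|v\|_{W_{4q/(2-q)}^1} \le c\|v\|_{W_{2q}^2}$ quoted just before this subsection gives $\nabla u, \nabla p \in L^{4q/(2-q)}(\Omega)$; (iii) the Sobolev embeddings for the increments, with $e,w \in W = \stackrel{\circ}{W}{}_{2q}^2(\Omega)$, so $\nabla e, \nabla w \in L^{4q/(2-q)}(\Omega)$ and $e,w$ themselves in $L^\infty$ (as $2q>n=2$ after possibly shrinking $q$). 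A typical mixed term such as $\varphi''(u)\,e\,\Delta u$ is then estimated by $\|\varphi''\|_\infty \|e\|_\infty \|\Delta u\|_{2q} \le c\|e\|_W$, and a product-of-gradients term such as $g'(u)\nabla e\cdot\nabla p$ by $\|g'\|_\infty \|\nabla e\|_{4q/(2-q)} \|\nabla p\|_{4q/(2-q)} \le c\|e\|_W$, with the conjugate exponents chosen so the product lands in $L^{2q}$; the term $-h$ is bounded directly since $\Upsilon = L^{2q}(\Omega)$. Summing the finitely many terms gives the $L^{2q}$ bound on $\delta F_1$, and the identical argument (now with only three terms) handles $\delta F_2$ in $L^{2q}$.

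The main obstacle is the second component of $H$: I must show $\delta F_2(e,w,h)$ lies not merely in $L^{2q}$ but in the fractional-order space $\stackrel{\circ}{W}{}_{2q}^{2-1/q}(\Omega)$ with the corresponding norm bound. The route I would take is to read $\delta F_2 = -\mathop{\rm div}(d(u)\nabla w) - \mathop{\rm div}(d'(u)e\nabla p) - h$ as data for a trace/regularity argument: since $d(u)$ is a bounded uniformly elliptic coefficient of sufficient smoothness (by (H1) and the regularity of $u$), elliptic theory in the scale $W_{2q}^s$ lets one trade two derivatives, and one checks that the right-hand side pieces have enough Sobolev regularity — $w \in W_{2q}^2$, $e\nabla p$ controlled via the product estimates above and the smoothness of $d'(u)$, and $h \in L^{2q} = \Upsilon$ — to conclude the claimed fractional regularity with a norm estimate linear in $(\|e\|_W,\|w\|_W,\|h\|_\Upsilon)$. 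Once both components are controlled, the constant $c$ depends only on the fixed triple $(u,p,f)$ through the $L^\infty$ bounds on the coefficients and the fixed Sobolev norms of $u,p$, which is exactly what boundedness of the linear operator $\delta F(u,p,f)$ requires.
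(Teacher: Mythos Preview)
Your $L^{2q}$ estimates for $\delta F_1$ and $\delta F_2$ --- expanding the divergences, invoking (H1) for $L^\infty$ bounds on the coefficients, and using H\"older together with the embedding $W_{2q}^2 \hookrightarrow W_{4q/(2-q)}^1$ on products of gradients --- are exactly the paper's approach; the paper even singles out the same representative term $\|e\,\nabla u\cdot\nabla p\|_{2q}$ and bounds it via $\|e\|_\infty\|\nabla u\|_{4q/(2-q)}\|\nabla p\|_4$.

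Where you diverge is in worrying about the second factor of $H$, namely $\stackrel{\circ}{W}{}_{2q}^{2-1/q}(\Omega)$. The paper's own proof simply does not address this: it estimates $\|\delta F_2\|_{2q}$ only and then writes the final bound as if both components were measured in $L^{2q}$. Your instinct that something more is required is sound given the stated definition of $H$, but your proposed fix cannot succeed as written: the term $-h$ sits in $\delta F_2$ with $h$ an arbitrary element of $\Upsilon = L^{2q}(\Omega)$, and $L^{2q}$ does not embed into $W_{2q}^{2-1/q}$, so no amount of elliptic regularity applied to the other terms will place $\delta F_2$ in that fractional space. The discrepancy is an inconsistency in the paper's setup rather than a gap you can close; in effect the paper treats the target of $\delta F$ as $L^{2q}\times L^{2q}$, and with that reading your proof and the paper's coincide.
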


\begin{proof}
We have for all $(e,w,h) \in W \times W \times \Upsilon$ that
\begin{multline*}
\delta_{p}F_{2}(u, p, f)(e, w, h)
=  - \mathop{\rm div}\left( d(u)
\nabla w\right)- \mathop{\rm div}\left( d'(u)e \nabla p \right)-h\\
= - d(u)\triangle w -d'(u) \nabla u \cdot \nabla w - d'(u)e
\triangle p -d'(u) \nabla e \cdot \nabla u- d'(u)e \nabla u \cdot \nabla p -h,
\end{multline*}
where $\delta_{p} F$ is the G\^ateaux derivative of $F$ with respect
to $p$. Then, using hypothesis $(H1)$, we obtain that
\begin{multline}
\label{eq:4.12}
\|\delta_{p}F_{2}(u, p, f)(e, w, h) \|_{2q}
\leq \| \nabla w \|_{2q}+ c \| \triangle w \|_{2q}\\
+ c  \|\nabla u \cdot \nabla w\|_{2q}
+c \| e \triangle p \|_{2q}+ c  \|\nabla e \cdot \nabla u
\|_{2q} + c  \| e \nabla u \cdot \nabla p\|_{2q}+ \|h\|_{2q}.
\end{multline}
We proceed to estimate the term $\| e \nabla u \cdot \nabla p\|_{2q}$.
Similar arguments can be applied to the remaining terms of \eqref{eq:4.12}.
We have
\begin{equation*}
\begin{split}
\| e \nabla u \cdot \nabla p\|_{2q} &\leq \|e\|_{\infty} \|
\nabla u \cdot \nabla p\|_{2q}\\
&\leq  \|e\|_{\infty} \|\nabla u\|_{\frac{4q}{2-q}} \|\nabla
p\|_{4}\\
&\leq c \|u\|_{W} \|p\|_{W} \|e\|_{W}.
\end{split}
\end{equation*}
Then,
\begin{equation}
\label{eq:4.13}
\|\delta_{p}F_{2}(u, p, f)(e, w, h) \|_{2q} \leq c \left(
\|u\|_{W}, \|p\|_{W}, \|f\|_{\Upsilon} \right) \left( \|e\|_{W}
+\|w\|_{W}+ \|h\|_{\Upsilon} \right).
\end{equation}
On the other hand,
\begin{equation*}
\begin{split}
\delta_{u}F_{1}&(u, p, f)(e, w, h)\\
&= -\mathop{\rm div}\left( \varphi'(u)
\nabla e\right)- \mathop{\rm div}\left( \varphi''(u)e
\nabla u \right)
- \mathop{\rm div}\left( g(u) \nabla w\right)
- \mathop{\rm div}\left( g'(u)e
\nabla p \right)\\
&= - \varphi'(u)\triangle e
-\varphi''(u) \nabla u \cdot \nabla e
-\varphi''(u)e \triangle u -\varphi''(u) \nabla e \cdot \nabla u
- \varphi'''(u)e |\nabla u|^{2}\\
& \quad -g(u)\triangle w- g'(u) \nabla u \cdot \nabla w
- g'(u) e \triangle p- g'(u)\nabla e \cdot \nabla p
-g''(u)e \nabla u \cdot \nabla p,
\end{split}
\end{equation*}
where $\delta_{u} F$ is the G\^ateaux derivative of $F$ with
respect to $u$. The same arguments as above give that
\begin{equation}
\label{eq:4.14}
\|\delta_{u}F_{1}(u, p, f)(e, w, h) \|_{2q} \leq c \left(
\|u\|_{W}, \|p\|_{W}, \|f\|_{\Upsilon} \right) \left( \|e\|_{W}+
\|w\|_{W}+ \|h\|_{\Upsilon} \right).
\end{equation}
Hence, by \eqref{eq:4.13} and \eqref{eq:4.14},
\begin{equation*}
\|\delta F(u, p, f)(e, w, h) \|_{H\times H\times \Upsilon} \leq c
\left( \|u\|_{W}, \|p\|_{W}, \|f\|_{\Upsilon} \right) \left(
\|e\|_{W}+ \|w\|_{W}+ \|h\|_{\Upsilon} \right).
\end{equation*}
Consequently the operator $\delta_{u}F_{1}(u, p, f)$
is linear and bounded.
\end{proof}

\begin{proof}[Proof of Theorem~\ref{thm5.1}]
In order to show that the image of $\delta F(\overline{u},
\overline{p}, \overline{f})$ is equal to $H$, we need to prove
that there exists a $(e, w, h) \in W \times W \times \Upsilon$
such that
\begin{equation}
\label{eq:4.15}
\begin{gathered}
-\mathop{\rm div}\left( \varphi'(\overline{u}) \nabla e\right)
- \mathop{\rm div}\left(\varphi''(\overline{u})e \nabla \overline{u} \right)
- \mathop{\rm div}\left(g(\overline{u}) \nabla w\right)
- \mathop{\rm div}\left( g'(\overline{u})e
\nabla \overline{p} \right)= \alpha,  \\
- \mathop{\rm div}\left( d(\overline{u})\nabla w\right)
- \mathop{\rm div}\left( d'(\overline{u})e
\nabla \overline{p} \right)-h = \beta, \\
\left. e\right|_{\partial \Omega} = 0,  \\
\left. w\right|_{\partial \Omega} = 0,
\end{gathered}
\end{equation}
for any $(\alpha, \beta) \in H$.
Writing the system \eqref{eq:4.15} for $h=0$ as
\begin{equation}
\label{eq:4.16}
\begin{gathered}
-\varphi'(\overline{u})\triangle e -2 \varphi''(\overline{u})\nabla
\overline{u} \cdot \nabla e - \varphi''(\overline{u})e \triangle
\overline{u}-\varphi'''(\overline{u})e |\nabla \overline{u}|^{2}\\
-g(\overline{u}) \triangle w- g'(\overline{u}) \nabla
\overline{u} \cdot \nabla w- g'(\overline{u}) e \triangle \overline{p}
-g'(\overline{u})\nabla \overline{p} \cdot \nabla e - g''(\overline{u})
e \nabla \overline{u} \cdot \nabla \overline{p} = \alpha,\\
-  d(\overline{u}) \triangle w
-d'(\overline{u}) \nabla \overline{u} \cdot \nabla w - d'(\overline{u})
e \triangle \overline{p} - d'(\overline{u}) \nabla \overline{u} \cdot
\nabla \overline{e} -d'(\overline{u}) e \nabla \overline{u} \cdot
\nabla \overline{p} = \beta,\\
\left. e\right|_{\partial \Omega} = 0, \\
\left. w\right|_{\partial \Omega} = 0,
\end{gathered}
\end{equation}
it follows from the regularity of the optimal solution
(Theorem~\ref{theorem4.1}) that
\begin{gather*}
\varphi''(\overline{u}) \triangle \overline{u},\,
\varphi'''(\overline{u}) |\nabla \overline{u}|^{2},\,
g'(\overline{u})  \triangle \overline{p},\,
g''(\overline{u})
\nabla \overline{u} \cdot \nabla \overline{p},\,
d'(\overline{u}) \triangle \overline{p},\,
d'(\overline{u})  \nabla \overline{u}
\cdot \nabla \overline{p}
\in L^{2q_{0}}(\Omega),\\
\varphi''(\overline{u})\nabla \overline{u},
\, g'(\overline{u})\nabla \overline{u},
\, g'(\overline{u})\nabla \overline{p},
\, d'(\overline{u})\nabla \overline{u}
\in L^{4q_{0}}(\Omega).
\end{gather*}
By general results of elliptic PDEs \cite{lsu,Lions,sol}, there
exists a unique solution of system \eqref{eq:4.16} and hence
there exists a $(e, w, 0)$ verifying \eqref{eq:4.15}. We conclude
that the image of $\delta F$ is equal to $H$.
\end{proof}


\subsection{Optimality Condition}

We consider the cost functional
$J: W \times  W \times \Upsilon \rightarrow \mathbb{R}$
\eqref{eq:cf} and the Lagrangian $\mathcal{L}$ defined by
$$
\mathcal{L}\left(u, p, f, p_{1}, e_{1}\right)=
J\left(u, p, f \right)+ \left\langle F(u, p, f),
\left(\begin{array}{cc}  p_{1}   \\
e_{1}
\end{array}\right) \right\rangle,
$$
where the bracket $\langle \cdot, \cdot \rangle$ denote the
duality between $H$ and $H'$.

\begin{theorem}
\label{thm5.2}
Under hypotheses (H1)--(H3), if $\left(\overline{u}, \overline{p},
\overline{f}\right)$ is an optimal solution to the problem of
minimizing \eqref{eq:cf} subject to \eqref{P}, then there exist
functions $\left(\overline{e_{1}}, \overline{p_{1}}\right)
\in W_{2}^{2}(\Omega) \times W_{2}^{2}(\Omega)$ satisfying the
following conditions:
\begin{equation*}
\begin{gathered}
\mathop{\rm div}\left( \varphi'(\overline{u}) \nabla e_{1}\right)
 -d'(\overline{u}) \nabla \overline{p} \cdot \nabla \overline{p_{1}}
-\varphi''\left(\overline{u}\right) \nabla \overline{u} \cdot \nabla
\overline{e_{1}} - g'(\overline{u})\nabla \overline{p} \cdot \nabla
\overline{e_{1}}= \overline{u}-U,\\
\left. \overline{e_{1}}\right|_{\partial \Omega} = 0,
\end{gathered}
\end{equation*}
\begin{equation*}
\begin{gathered}
\mathop{\rm div}\left( d(\overline{u})
\nabla \overline{p_{1}}\right)
+ \mathop{\rm div}\left( g(\overline{u}) \nabla
\overline{e_{1}}\right)=\overline{p}-P, \\
\left. \overline{p_{1}}\right|_{\partial \Omega} = 0,
\end{gathered}
\end{equation*}
\begin{equation}
\label{eq:4.19}
2q_{0} \beta_{1}|\overline{f}|^{2q_{0}-2}\overline{f}
= \overline{p_{1}}.
\end{equation}
\end{theorem}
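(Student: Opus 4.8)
The plan is to apply the Lagrange multiplier rule, whose hypotheses have now been assembled. By Theorem~\ref{theorem3.1} an optimal solution $\left(\overline{u},\overline{p},\overline{f}\right)$ exists; by Theorem~\ref{theorem4.1} it has the regularity that makes the operator $F$ and the Lagrangian $\mathcal{L}$ well defined on $W \times W \times \Upsilon$; by Theorem~\ref{thm5.1} the operator $F$ is G\^ateaux differentiable and $\delta F\left(\overline{u},\overline{p},\overline{f}\right)$ is surjective onto $H$. Surjectivity of the derivative of the constraint map is exactly the constraint qualification needed so that the optimal triple is not an abnormal (degenerate) extremal. Hence there exists a nontrivial multiplier $\left(\overline{p_{1}},\overline{e_{1}}\right) \in H'$ such that the Lagrangian $\mathcal{L}$ is stationary at $\left(\overline{u},\overline{p},\overline{f},\overline{p_{1}},\overline{e_{1}}\right)$, i.e. $\delta \mathcal{L}=0$ in all three directions $e$, $w$, $h$.

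Next I would compute the three partial G\^ateaux derivatives of $\mathcal{L}$ and set each to zero. Differentiating in the direction $h$ (variation of $f$ only, $e=w=0$) uses $\delta_f J = 2q_0 \beta_1 |\overline{f}|^{2q_0-2}\overline{f}$ from the third term of \eqref{eq:cf} and $\delta_f F = (0,-h)$, giving $2q_0\beta_1|\overline{f}|^{2q_0-2}\overline{f} - \overline{p_1} = 0$, which is \eqref{eq:4.19}. Differentiating in the direction $e$ (variation of $u$) produces, after pairing $\langle \delta_u F(\overline{u},\overline{p},\overline{f})(e,0,0),(\overline{p_1},\overline{e_1})\rangle$ with $\delta_u J = \overline{u}-U$, an identity that holds for all admissible $e$; integrating by parts to move all derivatives off $e$ and onto the multipliers turns this into the first stated PDE, with $\left.\overline{e_1}\right|_{\partial\Omega}=0$ arising because the test functions $e$ vanish on $\partial\Omega$ (so the boundary terms drop and the adjoint inherits the homogeneous Dirichlet condition). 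Similarly the variation in $w$ (variation of $p$), using $\delta_p J = \overline{p}-P$ and the $\delta_p F$ computed in Lemma~\ref{lemma5.2}, yields the second PDE with $\left.\overline{p_1}\right|_{\partial\Omega}=0$ after an analogous integration by parts. The explicit expressions for $\delta_u F_1$ and $\delta_p F_2$ in the proof of Lemma~\ref{lemma5.2} and in Theorem~\ref{thm5.1} are exactly the terms one transposes here.

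Finally I would establish the claimed regularity $\overline{e_1},\overline{p_1}\in W_2^2(\Omega)$. The adjoint system is linear in $(\overline{e_1},\overline{p_1})$ with coefficients built from $\overline{u},\overline{p}$ and derivatives of $\varphi,g,d$; by Theorem~\ref{theorem4.1} these coefficients lie in the spaces recorded just before the proof of Theorem~\ref{thm5.1} (e.g.\ $\varphi''(\overline{u})\nabla\overline{u}$, $g'(\overline{u})\nabla\overline{p}$, $d'(\overline{u})\nabla\overline{p}$ in suitable $L^r$), and the right-hand sides $\overline{u}-U$, $\overline{p}-P$ are in $L^2(\Omega)$ by (H2). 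Hypothesis (H3) is what guarantees the $\varphi'''$ term in $\delta_u F_1$ is bounded so that the adjoint operator has coefficients of the required integrability. Then $L^2$-elliptic regularity for the (coupled, but triangularly solvable) linear system — via the general results of \cite{lsu,Lions,sol} already invoked for \eqref{eq:4.16} — gives $\overline{e_1},\overline{p_1}\in W_2^2(\Omega)$.

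The main obstacle is the integration-by-parts bookkeeping in the $e$- and $w$-variations: the derivatives $\delta_u F_1$ and $\delta_p F_2$ contain many terms in divergence form, and one must carefully transpose each $\mathop{\rm div}$ onto the multiplier, keep track of which terms land on $\overline{e_1}$ versus $\overline{p_1}$, and verify that all boundary integrals vanish (using $\left.e\right|_{\partial\Omega}=\left.w\right|_{\partial\Omega}=0$ together with the regularity of $\overline{u},\overline{p}$ from Theorem~\ref{theorem4.1}) so that the formal adjoint is the one displayed and the homogeneous Dirichlet conditions on $\overline{e_1},\overline{p_1}$ are legitimate. A secondary point requiring care is justifying that the Lagrange multiplier genuinely lies in $W_2^2(\Omega)\times W_2^2(\Omega)$ rather than merely in the dual $H'$, which is where the regularity argument of the previous paragraph is essential.
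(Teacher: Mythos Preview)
Your proposal follows the paper's overall strategy: invoke the Lagrange multiplier rule (using the surjectivity from Theorem~\ref{thm5.1} as constraint qualification), write out $\delta\mathcal{L}=0$, separate into the three variations in $e$, $w$, $h$, and integrate by parts to extract the adjoint system and \eqref{eq:4.19}. The bookkeeping you flag is exactly what the paper carries out to arrive at its identity \eqref{eq:4.21}.

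The one genuine difference is how the $W_2^2$ regularity of the multipliers is obtained. You propose a direct bootstrap: once $(\overline{e_1},\overline{p_1})\in H'$ satisfy the adjoint equations weakly, apply linear elliptic regularity to upgrade them to $W_2^2$. The paper instead takes an indirect route: it \emph{independently} solves the adjoint system \eqref{eq:4.22} by elliptic theory to produce a solution $(e_1,p_1)\in W_2^2\times W_2^2$, and then shows this coincides with the abstract multiplier $(\overline{e_1},\overline{p_1})$. The identification is done by solving an auxiliary linearized state problem \eqref{eq:4.23} with right-hand sides $\mathrm{sign}(e_1-\overline{e_1})$ and $\mathrm{sign}(p_1-\overline{p_1})$, plugging that solution in as the test pair $(e,w)$, and deducing $\int|e_1-\overline{e_1}|+\int|p_1-\overline{p_1}|=0$. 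Your approach is the more standard one; the paper's device has the advantage of never asking what it means for an element of $H'$ to satisfy a PDE before any regularity is known, so the step you label ``secondary'' is precisely where the paper spends its extra effort.

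One small correction: the homogeneous Dirichlet conditions on $\overline{e_1},\overline{p_1}$ do not follow from the test functions $e,w$ vanishing on $\partial\Omega$ (that only kills the boundary terms and yields no information on the multipliers' traces). In the paper they are inherited from the independently constructed $W_2^2$ solution of \eqref{eq:4.22}; in your approach they would have to be imposed as part of the adjoint problem and then justified.
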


\begin{proof}
Let $\left( \overline{u}, \overline{p}, \overline{f} \right)$ be
an optimal solution to the problem of minimizing \eqref{eq:cf}
subject to \eqref{P}. It is well known (\textrm{cf., e.g.,}
\cite{fur}) that there exist Lagrange multipliers $\left(
\overline{p_{1}}, \overline{e_{1}} \right) \in H'$ verifying
$\delta_{(u, p, f)}\mathcal{L}
\left(\overline{u}, \overline{p}, \overline{f}, \overline{p_{1}},
\overline{e_{1}} \right)(e, w, h)= 0$ for all $(e, w, h)\in
W \times W \times \Upsilon$, with $\delta_{(u, p, f)}\mathcal{L}$
the G\^{a}teaux derivative of $\mathcal{L}$ with
respect to $(u, p, f)$. We then obtain
\begin{multline*}
\int_{\Omega} \left( (\overline{u}-U)e +(\overline{p}-P)w+ 2q_{0}
\beta_{1}|\overline{f}|^{2q_{0}-2}\overline{f}h
 \right)\, dx \\
+\int_{\Omega} \left(- \mathop{\rm div}\left( \varphi'(\overline{u}) \nabla e
\right)- \mathop{\rm div}\left( \varphi''(\overline{u})e \nabla
\overline{u}\right)- \mathop{\rm div}\left( g(\overline{u}) \nabla w \right) -
\mathop{\rm div}\left( g'(\overline{u}) e \nabla \overline{p}\right)
\right)\overline{e_{1}} \, dx \\
+\int_{\Omega} \left(- \mathop{\rm div}\left( d(\overline{u}) \nabla w \right)-
\mathop{\rm div}\left( d'(\overline{u}) e\nabla \overline{p}\right)-h
\right)\overline{p_{1}}  \, dx =0
\end{multline*}
for all $(e, w, h)\in W \times W \times \Upsilon$.
This last system is equivalent to
\begin{multline*}
\int_{\Omega} \left( (\overline{u}-U)e
-\mathop{\rm div}\left( d'(\overline{u})
e\nabla \overline{p}\right)\overline{p_{1}}
-\mathop{\rm div}\left(
\varphi'(\overline{u}) \nabla e \right) \overline{e_{1}} \right. \\
\left. - \mathop{\rm div}\left( \varphi''(\overline{u})e \nabla
\overline{u}\right) \overline{e_{1}}
- \mathop{\rm div}\left( g'(\overline{u}) e
\nabla \overline{p}\right)\overline{e_{1}} \right) \, dx \\
+\int_{\Omega} \left( (\overline{p}-P)w
- \mathop{\rm div}\left( d(\overline{u}) \nabla w \right)
\overline{p_{1}} - \mathop{\rm div}\left( g(\overline{u}) \nabla w
\right)\overline{e_{1}} \right) \, dx \\
 +\int_{\Omega} \left(2q_{0}
\beta_{1}|\overline{f}|^{2q_{0}-2}\overline{f}h - \overline{p_{1}}
h \right)\, dx=0
\end{multline*}
for all $(e, w, h)\in W \times W \times \Upsilon$. In other words,
\begin{multline}
\label{eq:4.21}
\int_{\Omega} \bigl( \left(\overline{u}-U\right)
+d'\left(\overline{u}\right)\nabla \overline{p}
\cdot \nabla \overline{p_{1}}\\
- \mathop{\rm div}\left( \varphi'(\overline{u}) \nabla \overline{e_{1}}\right)
+ \varphi''\left(\overline{u}\right) \nabla \overline{u} \cdot \nabla
\overline{e_{1}}+g'(\overline{u})\nabla \overline{p}
\cdot \nabla \overline{e_{1}} \bigr) e \, dx\\
+\int_{\Omega} \left( (\overline{p}-P)
- \mathop{\rm div}\left( d(\overline{u}) \nabla
\overline{p_{1}} \right) - \mathop{\rm div}\left( g(\overline{u}) \nabla
\overline{e_{1}} \right) \right)w \, dx \\
+\int_{\Omega} \left(2q_{0}
\beta_{1}|\overline{f}|^{2q_{0}-2}\overline{f}h
- \overline{p_{1}} h \right)\, dx =0
\end{multline}
for all $(e, w, h)\in W \times W \times \Upsilon$.
Consider now the system
\begin{equation}
\label{eq:4.22}
\begin{gathered}
\mathop{\rm div}\left( \varphi'(\overline{u}) \nabla e_{1} \right)-
d'(\overline{u})\nabla \overline{p} \cdot \nabla p_{1}
-\varphi''(\overline{u}) \nabla \overline{u} \cdot \nabla
e_{1}-g'(\overline{u})\nabla \overline{p} \cdot \nabla e_{1}
= \overline{u}-U,\\
\mathop{\rm div}\left( d(\overline{u}) \nabla p_{1} \right)
+\mathop{\rm div}\left( g(\overline{u}) \nabla e_{1} \right)
= \overline{p}-P, \\
\left. e_{1}\right|_{\partial \Omega}
=\left. p_{1}\right|_{\partial \Omega}  = 0.
\end{gathered}
\end{equation}
It follows again, by \cite{lsu,Lions,sol}, that \eqref{eq:4.22}
has a unique solution $(e_{1}, p_{1})\in W_{2}^{2}(\Omega) \times
W_{2}^{2}(\Omega)$. Since the problem of finding $(e, w)\in W
\times W$ satisfying
\begin{equation}
\label{eq:4.23}
\begin{split}
-\mathop{\rm div}\left( \varphi'(\overline{u}) \nabla e \right)
& - \mathop{\rm div}\left(\varphi''(\overline{u})
e \nabla \overline{u} \right)- \mathop{\rm div}\left(
g(\overline{u}) \nabla w \right)- \mathop{\rm div}\left( g'(\overline{u})e
\nabla \overline{p} \right)\\
&= \mathop{\rm sign}(e_{1}- \overline{e_{1}})
-\mathop{\rm div}\left(d(\overline{u}) \nabla w \right)-
\mathop{\rm div}\left( d'(\overline{u})e \nabla \overline{p} \right)\\
&= \mathop{\rm sign}\left(p_{1}- \overline{p_{1}}\right)
\end{split}
\end{equation}
is uniquely solvable on $W_{2q}^{2}\times W_{2q}^{2}$, choosing
$h=0$ in \eqref{eq:4.21}, multiplying \eqref{eq:4.22} by $(e, w)$,
integrating by parts, and making the difference with
\eqref{eq:4.21}, we obtain
\begin{equation}
\label{eq:4.24}
\begin{gathered}
\int_{\Omega} \left(-\mathop{\rm div}\left( \varphi'(\overline{u}) \nabla e
\right)- \mathop{\rm div}\left( \varphi''(\overline{u}) e \nabla \overline{u}
\right)- \mathop{\rm div}\left( g(\overline{u}) \nabla w \right)
- \mathop{\rm div}\left(g'(\overline{u})e \nabla \overline{p} \right) \right)\\
\times (e_{1}- \overline{e_{1}})\, dx
+\int_{\Omega}\left(-\mathop{\rm div}\left(d(\overline{u}) \nabla w \right)
-\mathop{\rm div}\left( d'(\overline{u})e \nabla \overline{p} \right)
\right)(p_{1}- \overline{p_{1}})\, dx  =0
\end{gathered}
\end{equation}
for all $(e, w) \in W \times W$.
Choosing $(e, w)$ in \eqref{eq:4.24} as the solution of system
\eqref{eq:4.23}, we have
\begin{gather*}
\int_{\Omega} \mathop{\rm sign}(e_{1}- \overline{e_{1}})(e_{1}
- \overline{e_{1}})\, dx + \int_{\Omega}
\mathop{\rm sign}(p_{1}- \overline{p_{1}})(p_{1}-\overline{p_{1}})\, dx = 0.
\end{gather*}
It follows that $e_{1}=\overline{ e_{1}}$ and  $p_{1}=\overline{p_{1}}$.
On the other hand, choosing $(e, w)=(0, 0)$ in \eqref{eq:4.21},
it follows \eqref{eq:4.19}, which concludes the proof of Theorem~\ref{thm5.2}.
\end{proof}


\section{Conclusion}
\label{con}

In this paper, we considered the optimal control of a steady-state dead oil isotherm problem
with Dirichlet boundary conditions, which is obtained from the standard parabolic-elliptic
system, where the relaxation time for the reduced saturation of oil is very small.
The main purpose was to prove existence and regularity of the optimal control
and then necessary optimality conditions. The proposed method is based on the Lagrangian approach.


\section*{Acknowledgements}

This work was supported by {\it FEDER} funds through
{\it COMPETE} --- Operational Programme Factors of Competitiveness
(``Programa Operacional Factores de Competitividade'')
and by Portuguese funds through the
{\it Center for Research and Development
in Mathematics and Applications} (University of Aveiro)
and the Portuguese Foundation for Science and Technology
(``FCT --- Funda\c{c}\~{a}o para a Ci\^{e}ncia e a Tecnologia''),
within project PEst-C/MAT/UI4106/2011
with COMPETE number FCOMP-01-0124-FEDER-022690.
Malinowska was supported by Bia{\l}ystok
University of Technology grant S/WI/02/2011.

The authors are very grateful to two anonymous referees
for their valuable comments and helpful suggestions.



\end{document}